\documentclass[10pt,draft,reqno]{amsart}
     \makeatletter
     \def\section{\@startsection{section}{1}%
     \z@{.7\linespacing\@plus\linespacing}{.5\linespacing}
     {\bfseries
     \centering
     }}
     \def\@secnumfont{\bfseries}
     \makeatother
\setlength{\textheight}{19.5 cm}
\setlength{\textwidth}{12.5 cm}

\usepackage{amsmath,amssymb, amsbsy}
\usepackage{subfigure}
\usepackage{graphpap,latexsym,epsf}
\usepackage{color,psfrag}
\usepackage[dvips]{graphicx}
\usepackage{enumerate}
\usepackage{bbm}
\usepackage{relsize}

\newcommand{\F}{{\mathcal F}}
\newcommand{\ieq}{\begin{equation}}
\newcommand{\eeq}{\end{equation}}
\newcommand{\ieqa}{\begin{eqnarray}}
\newcommand{\eeqa}{\end{eqnarray}}
\newcommand{\ieqas}{\begin{eqnarray*}}
\newcommand{\eeqas}{\end{eqnarray*}}
\newcommand{\f}{\hat{f}}

\newcommand{\1}{\mathlarger{\mathlarger{\mathbbm{1}}}}

\newtheorem{theorem}{Theorem}[section]
\newtheorem{lemma}[theorem]{Lemma}

\newtheorem{corollary}[theorem]{Corollary}
\theoremstyle{definition}
\newtheorem{definition}[theorem]{Definition}

\theoremstyle{remark}
\newtheorem{remark}[theorem]{Remark}
\numberwithin{equation}{section}
\setcounter{page}{1}

\begin{document}

\title[Optimal portfolios for different anticipating integrals]{Optimal portfolios for different anticipating integrals under insider information}

\author[Carlos Escudero]{Carlos Escudero*}
\thanks{* This work has been partially supported by the Government of Spain (Ministerio de Ciencia, Innovaci\'on y Universidades) through Project PGC2018-097704-B-I00.}
\address{Carlos Escudero: Departamento de Matem\'aticas Fundamentales, Universidad Nacional de Educaci\'on a Distancia, Spain}
\email{cescudero@mat.uned.es}

\author{Sandra Ranilla-Cortina}
\address{Sandra Ranilla-Cortina: Departamento de An\'alisis Matem\'atico y Matem\'atica Aplicada, Universidad Complutense de Madrid, Spain}
\email{sranilla@ucm.es}

\subjclass[2010] {60H05, 60H07, 60H10, 60H30, 91G80}

\keywords{Insider trading, Hitsuda-Skorokhod integral, Russo-Vallois forward integral, Ayed-Kuo integral, anticipating stochastic calculus, optimal portfolios.}

\begin{abstract}
We consider the non-adapted version of a simple problem of portfolio optimization in a financial market that results from the presence of insider information. We analyze it via anticipating stochastic calculus and compare the results obtained by means of the Russo-Vallois forward, the Ayed-Kuo, and the Hitsuda-Skorokhod integrals. We compute the optimal portfolio for each of these cases with the aim of establishing
a comparison between these integrals in order to clarify their potential use in this type of problem.
Our results give a partial indication that, while the forward integral yields a portfolio that is financially meaningful, the Ayed-Kuo and the Hitsuda-Skorokhod integrals do not provide an appropriate investment strategy for this problem. 
\end{abstract}

\maketitle

\section{Introduction}\label{introduction}
Many mathematical models in the applied sciences are
expressed in terms of stochastic differential equations such as
\begin{equation}\label{white}
\frac{dx}{dt}= a(x,t) + b(x,t) \, \xi(t),
\end{equation}
where $\xi(t)$ is a ``white noise''. Of course, this equation cannot be
understood in the sense of the classical differential calculus of Leibniz and Newton.
Instead, the use of stochastic calculus provides a precise meaning to these models. However, the choice of a particular notion of stochastic integration has
generated a debate that has expanded along decades~\cite{mmcc,kampen}.
This debate, which has been particularly intense in the physics literature, has been focused mainly on the choice between the It\^o integral~\cite{ito1,ito2}, which leads to the notation
\begin{equation*}\label{ito}
dx = a(x,t) \, dt + b(x,t) \, dB(t),
\end{equation*}
and the Stratonovich integral~\cite{stratonovich} usually denoted as
\begin{equation*}\label{str}
dx = a(x,t) \, dt + b(x,t) \circ dB(t),
\end{equation*}
where $B(t)$ is a Brownian motion. Also, quite often in the physics literature,
one finds different meanings associated to equation~\eqref{white}~\cite{mmcc}.
The use of different notions of stochastic integration leads, in general, to different dynamics and, when they exist, to different stationary probability distributions~\cite{hl}; but it may also lead to different numbers of solutions~\cite{ce,escudero2}.

Since the preeminent place for this debate on the noise interpretation has been the physics literature, the focus has been put on diffusion processes, perhaps due to the influence of the seminal works by Einstein~\cite{einstein} and Langevin~\cite{langevin}. Herein we move out of even the Markovian setting and, following the steps of~\cite{bastonsescudero,escudero}, we concentrate on stochastic differential equations with non-adapted terms.
Non-adaptedness arises in financial markets concomitantly to the presence of insider traders.
Let us exemplify this with a model that is composed by two assets, the first of which is free of risk such as a bank account
\begin{eqnarray}\nonumber
d S_0 &=& \rho \, S_0 \, d t \\ \nonumber
S_0(0) &=& M_0,
\end{eqnarray}
and the second one is risky such a stock
\begin{eqnarray}\nonumber
d S_1 &=& \mu \, S_1 \, d t + \sigma \, S_1 \, d B(t) \\ \nonumber
S_1(0) &=& M_1.
\end{eqnarray}
This model is characterized by the set of positive parameters
$\{M_0, M_1, \rho, \mu, \sigma\}$, each one of which has an economic significance:
\begin{itemize}
\item $M_0$ is the initial wealth to be invested in the bank account,
\item $M_1$ is the initial wealth to be invested in the stock,
\item $\rho$ is the interest rate of the bank account,
\item $\mu$ is the appreciation rate of the stock,
\item $\sigma$ is the volatility of the stock.
\end{itemize}
Therefore the total initial wealth is $M = M_0+M_1$. Moreover we assume the inequality $\mu > \rho$ that imposes the higher
expected return of the risky investment. We allow only buy-and-hold strategies in which the trader divides the fixed initial amount $M$ between the two assets; that is, long-only strategies are allowed.
The total wealth at time $t$ is
$$
S^{\text{(I)}}(t):=S_0(t)+S_1(t),
$$
that is, the sum of the returns from the bank account and the stock.
Assuming a fixed time horizon $T$ and
using It\^o calculus we can compute the expected final total wealth, which is given by
\begin{eqnarray}\nonumber
\mathbb{E}\left[S^{\text{(I)}}(T)\right]
&=& M_0 \, e^{\rho T} + M_1 \, e^{\mu T}
\\ \nonumber
&=& M \left[ \frac{M_0}{M} \, e^{\rho T} + \frac{M_1}{M} \, e^{\mu T} \right] \\ \nonumber
&=& M \left[ \frac{M_0}{M} \, e^{\rho T} + \frac{M-M_0}{M} \, e^{\mu T} \right].
\end{eqnarray}
The last expression, written in terms of a convex linear combination, shows that this quantity can be maximized by choosing the optimal pair
\begin{eqnarray} \nonumber
M_0 &=& 0 \\ \nonumber
M_1 &=& M-M_0=M.  
\end{eqnarray}
The corresponding maximal expected wealth therefore reads
\begin{equation*}\label{average}
\mathbb{E}\left[S^{\text{(I)}}(T)\right] = M e^{\mu T}.
\end{equation*}
This optimization problem is simple enough to allow for
an analytical approach to the extension we will consider herein. In particular, we will permit random and non-adapted initial conditions that will model the knowledge of an insider trader. Under these conditions, we will derive the optimal portfolio for different notions of stochastic integration. The precise problem is presented in the next section.

\section{Insider trading}\label{insider}

We consider a financial market in which
an insider trader, who possesses information on the future price of a stock, is present.
Precisely we assume the trader knows the value $S_1(T)$ at time $t=0$, what we will implement mathematically as if she knew the value $B(T)$.
Moreover we only consider buy-and-hold strategies for which shorting is not allowed. 
Mathematically this translates into her control of the anticipating initial condition $f(B(T))$, where $f \in L^{\infty}(\mathbb{R})$ and $0 \leq f \leq 1$, for the stock. Therefore, in our two assets market, we find the following two equations that model the insider wealth:
\begin{subequations}
\begin{eqnarray}\label{rode1}
d S_0 &=& \rho \, S_0 \, d t \\ \label{rode2}
S_0(0) &=& M \left(1-f(B(T))\right),
\end{eqnarray}
\end{subequations}
and
\begin{subequations}
\begin{eqnarray}\label{s1}
d S_1 &=& \mu \, S_1 \, d t + \sigma \, S_1 \, d B(t) \\ \label{s10}
S_1(0) &=& M f(B(T)).
\end{eqnarray}
\end{subequations}

This system of equations, as written in~\eqref{s1} and \eqref{s10}, is ill-posed. While problem~\eqref{rode1} and \eqref{rode2} could still be considered a random differential equation, the non-adaptedness of initial condition~\eqref{s10}
implies that equation~\eqref{s1} is ill-posed as an It\^o stochastic differential equation.
There is a way out of this pitfall that consists in changing the notion of stochastic integration from the It\^o integral to one of its generalizations that admit non-adapted integrands.
The following sections analyze three different possibilities: the Russo-Vallois forward, the Ayed-Kuo and the Hitsuda-Skorokhod stochastic integrals. We compute the optimal investment strategy provided by each of these integrals, and subsequently we compare them. We will show that, while any of these anticipating stochastic integrals guarantees the well-posedness of the problem at hand, the financial consequences of each choice might be very different.
The aim of this work is to clarify the suitability of the use of each of these stochastic integrals in this particular portfolio optimization.
We anticipate that the forward integral is the only one among these that provides an optimal investment strategy that takes advantage of the anticipating condition in the financial sense.
Therefore our analysis further supports the use of this integral in a financial context as employed,
for instance, in~\cite{bo,noep,do1,do2,do3,leon,nualart}. It is also worth mentioning that the problem of insider trading has been studied by means of different approaches, notably enlargement of filtrations~\cite{jyc,pk},
but semimartingale treatments are not as general as the use of anticipating stochastic calculus~\cite{noep}.

Let us conclude this section mentioning that this problem can be approached by means of much simpler
mathematical techniques, as it is done in section~\ref{frc}. However, the goal of this work is not to
solve this simplified financial question that, as said, is relatively simple to address. Our objective
is to give a partial indication of the use of three different anticipating stochastic integrals in
finance. For this purpose we need a simple enough problem at least for two reasons. The first is that
the availability of explicit solutions to stochastic differential equations interpreted in either the
Hitsuda-Skorokhod or Ayed-Kuo senses is limited~\cite{noep,hksz}. The second is that we want to maximize
the clarity of our derivations. Once the role of the different anticipating integrals in the context of
finance is clear, one is able to approach much more general questions as done for instance
in~\cite{bo,noep,do1,do2,do3,leon,nualart} with the forward integral. Herein we aim to highlight
some of the contrasts between the uses of these integrals in a financial context, so the nature
of this work is fundamentally methodological.

\section{The Russo-Vallois integral}\label{rvi}
The Russo-Vallois forward integral was introduced by F. Russo and P. Vallois in 1993 in~\cite{russovallois}. This stochastic integral generalizes the It\^o one, in the sense that it allows to integrate anticipating processes, but it produces the same results as the latter when the integrand is adapted~\cite{noep}.
\begin{definition}
A stochastic process $\{\varphi(t), t \in[a,b]\}$ is \textit{forward integrable} (in the weak sense) over $[a,b]$ with respect to Brownian motion $\{B(t), t\in[a,b]\}$ if there exists a stochastic process $\{I(t), t \in [a,b]\}$ such that
\begin{eqnarray}\nonumber
\sup_{t\in[a,b]} \left | \int_a^t \varphi(s) \frac{B(s+\varepsilon)-B(s)}{\varepsilon} ds - I(t) \right| \to 0, \ \ \  \ \mbox{as} \ \varepsilon \to 0^+,
\end{eqnarray}
in probability. In this case, $I(t)$ is the \textit{forward integral} of $\varphi(t)$ with respect to $B(t)$ on $[a,b]$ and we denote
\begin{eqnarray}\nonumber
I(t) &:=& \int_a^t \varphi(s) \, d^- B(s),  \ \ \ \ t \in [a,b].
\end{eqnarray}
\end{definition}
When the choice is the Russo-Vallois integral, we face the initial value problem
\begin{subequations}
\begin{eqnarray}\label{rv1}
d^- S_1 &=& \mu \, S_1 \, dt + \sigma \, S_1 \, d^- B(t) \\ \label{rv2}
S_1(0) &=& M f(B(T)),
\end{eqnarray}
\end{subequations}
where $d^-$ denotes the Russo-Vallois forward stochastic differential, and $f$ was defined in the previous section. It follows directly from the results in~\cite{noep} that this problem possesses a unique solution.
In the next statement, we establish the optimal investment strategy for the insider trader under Russo-Vallois forward integration.
\begin{theorem}\label{mainthrv}
Let $f$ be a function of $B(T)$ such that $f \in L^{\infty}(\mathbb{R})$ and $0 \leq f \leq 1$. The optimal investment strategy under Russo-Vallois integration is
\begin{eqnarray}\nonumber
f(B(T)) &=& \mathlarger{\mathlarger{\mathbbm{1}}}_{ \big \lbrace B(T) > \frac{T}{\sigma} \left(\rho - \mu + \frac{1}{2}\sigma^2\right) \big \rbrace},
\end{eqnarray}
for model~\eqref{rode1}-\eqref{rode2} and~\eqref{rv1}-\eqref{rv2}.
\end{theorem}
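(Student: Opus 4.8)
The plan is to reduce this stochastic optimization to an elementary pointwise maximization, exploiting the fact that the forward integral is ``blind to the future.'' First I would invoke the explicit solution of the forward equation \eqref{rv1}--\eqref{rv2} guaranteed by the results of \cite{noep}, namely
$$
S_1(t) = M f(B(T)) \exp\left( \left(\mu - \tfrac{1}{2}\sigma^2\right) t + \sigma B(t) \right).
$$
The decisive feature here, and the one that will later distinguish the forward integral from its competitors, is that the anticipating factor $f(B(T))$ passes through the equation exactly as a deterministic constant would: when integrating up to a time $t < T$ the forward integral does not ``see'' the terminal value $B(T)$, so $f(B(T))$ simply multiplies the usual geometric-Brownian-motion solution. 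Evaluating at $t = T$ gives $S_1(T) = M f(B(T)) \exp\left((\mu - \sigma^2/2)T + \sigma B(T)\right)$, while the bank account equation \eqref{rode1}--\eqref{rode2} is an ordinary linear ODE with solution $S_0(T) = M(1 - f(B(T))) e^{\rho T}$.

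Next I would add the two components and take expectations. Writing $f = f(B(T))$ for brevity,
$$
\mathbb{E}\left[S^{\text{(I)}}(T)\right] = M e^{\rho T} + M\, \mathbb{E}\left[ f(B(T)) \left( e^{(\mu - \sigma^2/2)T + \sigma B(T)} - e^{\rho T} \right) \right],
$$
so maximizing the expected final wealth over the admissible class amounts to maximizing the single expectation on the right-hand side. I would note that all expectations are finite, since $\mathbb{E}\left[e^{\sigma B(T)}\right] = e^{\sigma^2 T / 2} < \infty$.

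Then comes the optimization proper. Setting $h(x) := e^{(\mu - \sigma^2/2)T + \sigma x} - e^{\rho T}$, and using that $B(T)$ has a density and that $0 \le f \le 1$, the expectation $\mathbb{E}\left[f(B(T)) h(B(T))\right]$ is maximized by the bang-bang choice $f^*(x) = 1$ on $\{h(x) > 0\}$ and $f^*(x) = 0$ on $\{h(x) < 0\}$, the value on the null set $\{h(x) = 0\}$ being irrelevant. Rigorously, for any admissible $g$ one has the pointwise inequality $f^*(x) h(x) \ge g(x) h(x)$, which transfers directly to expectations. Finally I would solve $h(x) > 0$, i.e. $(\mu - \sigma^2/2)T + \sigma x > \rho T$, which rearranges to $x > \frac{T}{\sigma}\left(\rho - \mu + \frac{1}{2}\sigma^2\right)$, producing precisely the stated indicator $f(B(T)) = \mathbbm{1}_{\{B(T) > \frac{T}{\sigma}(\rho - \mu + \sigma^2/2)\}}$. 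The only genuine point requiring care — rather than any real difficulty — is the justification of the bang-bang step: that $f^*$ is measurable (it is the indicator of a half-line in the single variable $B(T)$) hence admissible, and that pointwise dominance implies dominance in expectation; everything else is routine.
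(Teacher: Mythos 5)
Your proposal is correct and follows essentially the same route as the paper's proof: solve \eqref{rv1}--\eqref{rv2} explicitly using the fact that the forward integral reproduces It\^o calculus, write the expected terminal wealth as $Me^{\rho T}$ plus an expectation of $f(B(T))$ against a sign-changing weight, and conclude by the bang-bang (pointwise) maximization that yields the indicator of the half-line where the weight is positive. The only cosmetic difference is that you keep the argument at the level of expectations of $h(B(T))$ while the paper writes out the Gaussian density integrals explicitly; also note that the quantity you denote $\mathbb{E}[S^{\text{(I)}}(T)]$ should be $\mathbb{E}[M^{\text{(RV)}}(T)]$ in the paper's notation.
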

\begin{proof}
The Russo-Vallois integral preserves It\^o calculus~\cite{noep} so using this classical stochastic calculus it is possible to solve problem~\eqref{rv1}-\eqref{rv2} explicitly to find
\begin{subequations}
\begin{eqnarray}\nonumber
S_0(t) &=& M \left(1-f(B(T))\right) e^{\rho t} \\ \nonumber
S_1(t) &=& M f(B(T))e^{\left(\mu - \sigma^2 /2 \right)t + \sigma B(t)}.
\end{eqnarray}
\end{subequations}
Our goal is to find a strategy $f$ such that $\mathbb{E}\left[M^{\text{(RV)}}(T)\right]$,
with $M^{\text{(RV)}}(T):=S_1(T)+S_0(T)$, is maximized. We compute
\begin{eqnarray}\nonumber
\mathbb{E}\left[M^{\text{(RV)}}(T)\right] &=& \mathbb{E}\left[S_0(T)\right] + \mathbb{E}\left[S_1(T)\right] \\ \nonumber
&=& M \left( 1- \mathbb{E}\left[f(B(T))\right] \right) e^{\rho T} + M \mathbb{E}\left[f(B(T))\right]e^{\sigma B(T)} e^{\left(\mu - \sigma^2/2\right)T} \\ \nonumber
&=& M e^{\rho T} \left( 1 - \int_{-\infty}^{\infty} \frac{1}{\sqrt{2 \pi T}} f(x) e^{-\frac{x^2}{2T}}dx \right) \\ \nonumber
&& + Me^{\left(\mu - \sigma^2 /2 \right)T} \left( \int_{-\infty}^{\infty} \frac{1}{\sqrt{2 \pi T}} f(x) e^{-\frac{x^2}{2T}} e^{\sigma x} dx \right),
\end{eqnarray}
where we have used that $B(T) \sim \mathcal{N}(0,T)$. Now consider
\begin{eqnarray}\nonumber
    \Bar{M}(f(x)) &:=& \frac{\mathbb{E}\left[M^{\text{(RV)}}(T)\right]}{M}.
\end{eqnarray}
Hence, we get
\begin{eqnarray}\nonumber
\Bar{M}(f(x)) &=& e^{\rho T} \left( 1 - \int_{-\infty}^{\infty} \frac{1}{\sqrt{2 \pi T}} f(x) e^{-\frac{x^2}{2T}}dx \right) \\ \nonumber
&& + e^{\left(\mu - \sigma^2 /2 \right)T} \left( \int_{-\infty}^{\infty} \frac{1}{\sqrt{2 \pi T}} f(x) e^{-\frac{x^2}{2T}} e^{\sigma x} dx \right) \\ \nonumber
&=& e^{\rho T} + \int_{-\infty}^{\infty} \frac{1}{\sqrt{2 \pi T}} f(x) e^{-\frac{x^2}{2T}} \left( -e^{\rho T} + e^{\left(\mu -\sigma^2/2\right)T} e^{\sigma x} \right) dx.
\end{eqnarray}
The sign of this integrand is determined by the value of $x$, and it possesses a unique root $x_c$ at
\begin{eqnarray}\nonumber
    x_c &=& \frac{T}{\sigma} \left( \rho - \mu + \frac{\sigma^2}{2} \right ).
\end{eqnarray}
The inequality $x > x_c$ corresponds to
\begin{eqnarray}\nonumber
    B(T) &>& \frac{T}{\sigma} \left( \rho - \mu + \frac{\sigma^2}{2} \right ),
\end{eqnarray}
and to the positivity of the integrand. Clearly, in this interval we should take $f$ as large as possible in order to maximize $\mathbb{E}\left[M^{\text{RV}}(T)\right]$. On the other hand, if $x < x_c$ then the integrand is negative and we should take $f$ as small as possible for the same reason. This, together with the assumption $0 \leq f \leq 1$, renders the optimal investment strategy
\begin{eqnarray}\nonumber
    f(B(T)) &=& \mathlarger{\mathlarger{\mathbbm{1}}}_{\big\lbrace B(T) > \frac{T}{\sigma} \left( \rho - \mu + \frac{\sigma^2}{2} \right ) \big\rbrace}.
\end{eqnarray}
\end{proof}

\begin{remark}
The function $f$ from Theorem \ref{mainthrv} implies that the trader should invest the whole amount $M$ in the bank account or the stock according to the value of $B(T)$, in particular, in the asset which actual value is larger at the maturity time $t=T$ (something that is known by the insider). Hence, this investment strategy not only maximizes the expected value $\mathbb{E}\left[M^{\text{(RV)}}(T)\right]$, but it does also take advantage of the anticipating condition in an intuitive way. Thus, the Russo-Vallois integral works as one would expect from the financial point of view, at least for this formulation of the insider trading problem.
\end{remark}

\begin{remark}
The expected wealth of the Russo-Vallois insider under this optimal strategy was computed in~\cite{escudero} and reads
\begin{eqnarray}\nonumber
\mathbb{E}\left[M^{\text{(RV)}}(T)\right] &=& M \, \Phi \left[ \frac{(\sigma^2 + 2 \rho - 2 \mu)\sqrt{T}}{2 \, \sigma} \right] e^{\rho T}
\\ \nonumber & & + \,
M \, \Phi \left[ \frac{(\sigma^2 - 2 \rho + 2 \mu)\sqrt{T}}{2 \, \sigma} \right] e^{\mu T},
\end{eqnarray}
where
$$
\Phi\,(\cdot)= \frac{1}{\sqrt{2 \pi}} \int_{- \infty}^{\, \cdot} \! e^{-s^2/2} \, ds ,
$$
is the cumulative distribution function of the standard normal distribution.
In the same reference it was proven that
$$
\mathbb{E}\left[S^{\text{(I)}}(T)\right] < \mathbb{E}\left[M^{\text{(RV)}}(T)\right],
$$
a fact that matches well with what one expects from the financial viewpoint.
\end{remark}

\section{The Ayed-Kuo integral}\label{aki}
The Ayed-Kuo integral was introduced in~\cite{akuo1,akuo2}. As the previous theory, it generalizes the It\^o integral to anticipating integrands.
Let us now consider a Brownian motion $\{B(t), t \geq 0\}$ and a filtration $\{\F_t, t \geq 0\}$ such that:
\begin{itemize}
\item[(i)] For all $t \geq 0$, $B(t)$ is $\F_t$-measurable,
\item[(ii)] for all $0 \leq s < t$, $B(t)-B(s)$ is independent of $\F_s$.
\end{itemize}
We also recall the notion of instantly independent stochastic process introduced in~\cite{akuo1}.
\begin{definition}\label{ayedkuo}
A stochastic process $\{\varphi(t)$, $t \in [a,b]\}$ is said to be an \textit{instantly independent stochastic process} with respect to the filtration $\{\F_t, t \in [a,b]\}$, if and only if, $\varphi(t)$ is independent of $\F_t$ for each $t \in [a,b]$.
\end{definition}
For the Ayed-Kuo integral, the integrand is assumed to be a product of an adapted stochastic process with respect to the filtration $\F_t$ and an instantly independent stochastic process. Next, we recall its definition as given in~\cite{akuo1}.
\begin{definition}
Let $\{f(t), t\in [a,b]\}$ be a $\{\F_t\}$-adapted stochastic process and let $\{\varphi(t), t\in [a,b]\}$ be an instantly independent stochastic process with respect to $\{\F_t, t \in [a,b]\}$. The \textit{Ayed-Kuo stochastic integral} of $f(t)\varphi(t)$ is defined by
\begin{eqnarray}\nonumber
\int_a^b f(t) \varphi(t) \, d^*B(t) &: =& \lim_{\left| \Pi_n \right| \to 0} \sum_{i=1}^n f(t_{i-1}) \varphi(t_i) \left(B(t_i)-B(t_{i-1}) \right),
\end{eqnarray}
provided that the limit in probability exists, where $\Pi = \lbrace a = t_0, t_1, t_2, ..., t_n = b \rbrace $ is a partition of the interval $[a,b]$ and $\left| \Pi_n \right| = \max _{1 \leq i \leq n} \left(t_i - t_{i-1}\right)$.
\end{definition}

This definition was extended in~\cite{hksz}, and this extension is the one we are going to use herein.
It reads:

\begin{definition}\label{extak}
Consider a sequence $\lbrace \Phi_n(t) \rbrace _{n=1}^{\infty}$ of stochastic processes of the form
\begin{eqnarray}\label{defkuo}
\Phi_n(t) &: =& \sum_{i=1}^k f_{i}(t) \varphi_{i}(t),  \ \ \ \ a \leq t \leq b,
\end{eqnarray}
where $f_i(t)$ are $\{\F_t\}$-adapted continuous stochastic processes and $\varphi_i(t)$ are continuous stochastic processes being instantly independent of $\{\F_t\}$, and $k \in \mathbb{N}$.
Suppose $\Phi(t)$ is a stochastic process satisfying the conditions:
\begin{itemize}
\item[(a)] $\int_a^b \left| \Phi(t) - \Phi_n(t) \right|^2 dt \ \to \ 0 \ \mbox{almost surely,}$
\item[(b)] $\int_a^b \Phi_n(t) d^{*}B(t) \ \  \mbox{converges in probability,}$
\end{itemize}
as $n \to \infty$. Then the \textit{stochastic integral} of $\Phi(t)$ is defined by
\begin{eqnarray}\nonumber
\int_a^b \Phi(t) d^{*}B(t) &: =& \lim_{n \to \infty} \int_a^b \Phi_n(t) d^{*}B(t),  \ \ \ \ \text{in probability}.
\end{eqnarray}
\end{definition}

\begin{remark}
The integral
$$
\int_a^b \Phi_n(t) d^{*}B(t)
$$
is defined by linearity from Definition~\ref{ayedkuo}; its consistency was proven in~\cite{hksz}.
\end{remark}

For the Ayed-Kuo integral we denote the initial value problem as
\begin{subequations}
\begin{eqnarray} \label{ak1}
d^* S_1 &=& \mu \, S_1 \, dt + \sigma \, S_1 \, d^* B(t) \\ \label{ak2}
S_1(0) &=& M f(B(T)),
\end{eqnarray}
\end{subequations}
where $d^*$ denotes the Ayed-Kuo stochastic differential, and $f$ is as before.
This anticipating stochastic differential equation, as the previous case, is known to have
a unique and explicitly computable solution at least when $f \in \mathcal{C}(\mathbb{R})$~\cite[Theorem 4.8]{hksz}.
Now we need to extend this result to $f \in L^\infty(\mathbb{R})$, and to this end we introduce the following
auxiliary result.

\begin{lemma}\label{lemmappr}
Let $\Upsilon(t)$ be a stochastic process that satisfies the conditions:
\begin{itemize}
\item[(a)] $\int_a^b \left| \Upsilon(t) - \Upsilon_n(t) \right|^2 dt \ \to \ 0 \ \mbox{almost surely,}$
\item[(b)] $\int_a^b \Upsilon_n(t) d^{*}B(t) \ \  \mbox{converges in probability,}$
\end{itemize}
as $n \to \infty$, where $\lbrace \Upsilon_n(t) \rbrace _{n=1}^{\infty}$ is a sequence of
Ayed-Kuo integrable (in the sense of Definition~\ref{extak}) stochastic processes.
Then $\Upsilon(t)$ is an Ayed-Kuo integrable process and
\begin{eqnarray}\nonumber
\int_a^b \Upsilon(t) d^{*}B(t) &=& \lim_{n \to \infty} \int_a^b \Upsilon_n(t) d^{*}B(t),  \ \ \ \ \text{in probability}.
\end{eqnarray}
\end{lemma}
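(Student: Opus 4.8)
The plan is to produce for $\Upsilon$ a witnessing approximating sequence of elementary processes of the form~\eqref{defkuo} by a diagonal extraction from the approximating sequences of the individual $\Upsilon_n$, so that Definition~\ref{extak} applies to $\Upsilon$ directly. Since each $\Upsilon_n$ is Ayed-Kuo integrable, I may fix for every $n$ a sequence $\{\Phi_{n,m}(t)\}_{m=1}^{\infty}$ of processes $\sum_{i} f_{i}(t)\varphi_{i}(t)$ (the $f_i$ being $\{\F_t\}$-adapted and continuous, the $\varphi_i$ continuous and instantly independent of $\{\F_t\}$) such that $\int_a^b |\Upsilon_n(t)-\Phi_{n,m}(t)|^2\,dt\to 0$ almost surely and $\int_a^b \Phi_{n,m}(t)\,d^{*}B(t)\to\int_a^b\Upsilon_n(t)\,d^{*}B(t)$ in probability as $m\to\infty$. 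Denote by $J$ the in-probability limit of $\int_a^b\Upsilon_n(t)\,d^{*}B(t)$ guaranteed by hypothesis~(b).

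Next I would select diagonal indices $m(n)$ large enough that simultaneously
\begin{eqnarray}\nonumber
\mathbb{P}\left( \int_a^b |\Upsilon_n(t)-\Phi_{n,m(n)}(t)|^2\,dt > 2^{-n} \right) &<& 2^{-n}, \\ \nonumber
\mathbb{P}\left( \left| \int_a^b \Phi_{n,m(n)}(t)\,d^{*}B(t) - \int_a^b \Upsilon_n(t)\,d^{*}B(t) \right| > 2^{-n} \right) &<& 2^{-n};
\end{eqnarray}
this is possible since, for each fixed $n$, almost sure convergence (for the $L^2$ term) and convergence in probability (for the integrals) both force the corresponding probability to tend to $0$ as $m\to\infty$. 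Writing $\Psi_n:=\Phi_{n,m(n)}$, which is again of the form~\eqref{defkuo}, the Borel-Cantelli lemma then yields $\int_a^b |\Upsilon_n(t)-\Psi_n(t)|^2\,dt\to 0$ and $\int_a^b \Psi_n(t)\,d^{*}B(t)-\int_a^b\Upsilon_n(t)\,d^{*}B(t)\to 0$, both almost surely.

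To finish I would verify the two requirements of Definition~\ref{extak} for the pair $(\Upsilon,\{\Psi_n\})$. Condition~(a) follows from the bound $\int_a^b|\Upsilon(t)-\Psi_n(t)|^2\,dt\le 2\int_a^b|\Upsilon(t)-\Upsilon_n(t)|^2\,dt+2\int_a^b|\Upsilon_n(t)-\Psi_n(t)|^2\,dt$, whose first summand vanishes almost surely by hypothesis~(a) and whose second vanishes almost surely by the Borel-Cantelli step. Condition~(b) follows by writing $\int_a^b\Psi_n(t)\,d^{*}B(t)$ as the sum of $\int_a^b\Psi_n(t)\,d^{*}B(t)-\int_a^b\Upsilon_n(t)\,d^{*}B(t)$, which tends to $0$ in probability, and $\int_a^b\Upsilon_n(t)\,d^{*}B(t)$, which tends to $J$ in probability. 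Definition~\ref{extak} then gives that $\Upsilon$ is Ayed-Kuo integrable with $\int_a^b\Upsilon(t)\,d^{*}B(t)=\lim_{n\to\infty}\int_a^b\Psi_n(t)\,d^{*}B(t)=J=\lim_{n\to\infty}\int_a^b\Upsilon_n(t)\,d^{*}B(t)$ in probability, which is the assertion.

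The point requiring care --- and essentially the only obstacle --- is the bookkeeping of the convergence modes: Definition~\ref{extak} demands almost sure $L^2([a,b])$-convergence of the approximants, whereas the raw diagonal construction delivers only convergence in probability, the gap being closed by the Borel-Cantelli upgrade after choosing $m(n)$ fast enough; moreover the same index $m(n)$ must control both the $L^2$ approximation of $\Upsilon_n$ and the proximity of the associated Ayed-Kuo integrals, which is why the two probability estimates are imposed together. Beyond this, the argument is the routine proof that the class of Ayed-Kuo integrable processes is closed under the stated double limit.
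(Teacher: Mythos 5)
Your proof is correct and follows essentially the same diagonal argument as the paper: for each $n$ you select an approximant $\Phi_{n,m(n)}$ of the form~\eqref{defkuo} that is simultaneously close to $\Upsilon_n$ in the almost-sure $L^2([a,b])$ sense and in the sense of the Ayed--Kuo integrals, and then verify the two conditions of Definition~\ref{extak} for the resulting diagonal sequence. The only cosmetic difference is that you impose summable $2^{-n}$ bounds and invoke Borel--Cantelli to obtain almost sure convergence along the full sequence, whereas the paper uses $1/n$ bounds and passes to a subsequence at the end; these are equivalent ways of upgrading convergence in probability.
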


\begin{proof}
Let $\lbrace \Phi_{m}^{(n)}(t) \rbrace _{m=1}^{\infty}$ be an approximating sequence
of $\Upsilon_n(t)$ as in Definition~\ref{extak}.
Consider for $\varepsilon>0$ the inclusion of events
\begin{eqnarray}\nonumber
&& \left\{ \left| \int_a^b \Upsilon(t) d^{*}B(t) - \int_a^b \Phi_{m_n'}^{(n)}(t) d^{*}B(t) \right| > \varepsilon \right\} \\ \nonumber &\subset&
\left\{ \left| \int_a^b \Upsilon_n(t) d^{*}B(t) - \int_a^b \Phi_{m_n'}^{(n)}(t) d^{*}B(t) \right| > \frac{\varepsilon}{2} \right\} \\ \nonumber
&& \cup \left\{ \left| \int_a^b \Upsilon(t) d^{*}B(t) - \int_a^b \Upsilon_n(t) d^{*}B(t) \right| > \frac{\varepsilon}{2} \right\},
\end{eqnarray}
where we have chosen, for each $n \in \mathbb{N}$, a $m_n' \in \mathbb{N}$ large enough such that
$$
\mathbb{P} \left\{ \left| \int_a^b \Upsilon_n(t) d^{*}B(t) - \int_a^b \Phi_{m_n'}^{(n)}(t) d^{*}B(t) \right| > \frac{\varepsilon}{2} \right\} \le \frac{1}{n}
$$
and the same holds for all $m \ge m_n'$;
note that this is possible as a consequence of the Ayed-Kuo integrability of the constituents of the sequence $\lbrace \Upsilon_n(t) \rbrace _{n=1}^{\infty}$. This in turn implies
\begin{eqnarray}\nonumber
&& \mathbb{P} \left\{ \left| \int_a^b \Upsilon(t) d^{*}B(t) - \int_a^b \Phi_{m_n'}^{(n)}(t) d^{*}B(t) \right| > \varepsilon \right\} \\ \nonumber &\le&
\mathbb{P} \left\{ \left| \int_a^b \Upsilon(t) d^{*}B(t) - \int_a^b \Upsilon_n(t) d^{*}B(t) \right| > \frac{\varepsilon}{2} \right\} \\ \nonumber
&& + \mathbb{P} \left\{ \left| \int_a^b \Upsilon_n(t) d^{*}B(t) - \int_a^b \Phi_{m_n'}^{(n)}(t) d^{*}B(t) \right| > \frac{\varepsilon}{2} \right\} \\ \nonumber &\le&
\mathbb{P} \left\{ \left| \int_a^b \Upsilon(t) d^{*}B(t) - \int_a^b \Upsilon_n(t) d^{*}B(t) \right| > \frac{\varepsilon}{2} \right\} + \frac{1}{n},
\end{eqnarray}
and consequently
\begin{eqnarray}\nonumber
&& \lim_{n \to \infty} \mathbb{P} \left\{ \left| \int_a^b \Upsilon(t) d^{*}B(t) - \int_a^b \Phi_{m_n'}^{(n)}(t) d^{*}B(t) \right| > \varepsilon \right\} \\ \nonumber &\le& \lim_{n \to \infty} \mathbb{P} \left\{ \left| \int_a^b \Upsilon(t) d^{*}B(t) - \int_a^b \Upsilon_{n}(t) d^{*}B(t) \right| > \frac{\varepsilon}{2} \right\}
\\ \nonumber &=& 0,
\end{eqnarray}
for each $\varepsilon>0$
by assumption (b), what as a matter of fact implies condition (b) in Definition~\ref{extak}, and where we have defined
\begin{eqnarray}\nonumber
\int_a^b \Upsilon(t) d^{*}B(t) &:=& \lim_{n \to \infty} \int_a^b \Upsilon_n(t) d^{*}B(t),  \ \ \ \ \text{in probability},
\end{eqnarray}
which is a well defined random variable again by assumption (b).

Since almost sure convergence implies convergence in probability,
for each $n \in \mathbb{N}$ we may choose a $m_n \in \mathbb{N}$ large enough such that
\begin{eqnarray}\nonumber
\mathbb{P} \left\{ \int_a^b \left| \Upsilon_n(t) - \Phi_{m_n}^{(n)}(t) \right|^2 dt
> \frac{\delta}{4} \right\} \le \frac{1}{n}, \qquad \delta>0,
\end{eqnarray}
and the same holds for all $m \ge m_n$.
This is possible on account of $\lbrace \Upsilon_n(t) \rbrace _{n=1}^{\infty}$ being a family of Ayed-Kuo integrable processes,
and where $\lbrace \Phi_{m}^{(n)}(t) \rbrace _{m=1}^{\infty}$ is the same approximating sequence
as before.
Arguing similarly as before we find
\begin{eqnarray}\nonumber
&& \left\{ \int_a^b \left| \Upsilon(t) - \Phi_{m_n}^{(n)}(t) \right|^2 dt
> \delta \right\} \\ \nonumber &\subset&
\left\{ \int_a^b \left| \Upsilon(t) - \Upsilon_n(t) \right|^2 dt
> \frac{\delta}{4} \right\}
\cup \left\{\int_a^b \left| \Upsilon_n(t) - \Phi_{m_n}^{(n)}(t) \right|^2 dt  > \frac{\delta}{4} \right\},
\end{eqnarray}
and therefore
\begin{eqnarray}\nonumber
&& \mathbb{P} \left\{ \int_a^b \left| \Upsilon(t) - \Phi_{m_n}^{(n)}(t) \right|^2 dt > \delta \right\} \\ \nonumber &\le&
\mathbb{P} \left\{ \int_a^b \left| \Upsilon(t) - \Upsilon_n(t) \right|^2 dt > \frac{\delta}{4} \right\}
+ \mathbb{P} \left\{ \int_a^b \left| \Upsilon_n(t) - \Phi_{m_n}^{(n)}(t) \right|^2 dt > \frac{\delta}{4} \right\} \\ \nonumber &\le&
\mathbb{P} \left\{ \int_a^b \left| \Upsilon(t) - \Upsilon_n(t) \right|^2 dt > \frac{\delta}{4} \right\} + \frac{1}{n}.
\end{eqnarray}
In consequence
\begin{eqnarray}\nonumber
&& \lim_{n \to \infty} \mathbb{P} \left\{ \int_a^b \left| \Upsilon(t) - \Phi_{m_n}^{(n)}(t) \right|^2 dt > \delta \right\} \\ \nonumber &\le& \lim_{n \to \infty} \mathbb{P} \left\{ \int_a^b \left| \Upsilon(t) - \Upsilon_n(t) \right|^2 dt > \frac{\delta}{4} \right\}
\\ \nonumber &=& 0,
\end{eqnarray}
for each $\delta>0$ by assumption (a). Now, by taking a subsequence
$\left\lbrace \Phi_{m_{n_j}}^{(n_j)}(t) \right\rbrace _{j=1}^{\infty} \subset
\left\lbrace \Phi_{m_{n}}^{(n)}(t) \right\rbrace _{n=1}^{\infty}$
if necessary, we conclude
$$
\lim_{j \to \infty}
\int_a^b \left| \Upsilon(t) - \Phi_{m_{n_j}}^{(n_j)}(t) \right|^2 dt =0, \qquad \text{almost surely};
$$
therefore condition (a) in Definition~\ref{extak} follows.

Altogether these results imply
\begin{eqnarray}\nonumber
\int_a^b \Upsilon(t) d^{*}B(t) &=& \lim_{j \to \infty}
\int_a^b \Phi_{m_{n_j} \vee m_{n_j}'}^{(n_j)}(t) d^{*}B(t), \ \ \ \ \text{in probability},
\end{eqnarray}
and
$$
\int_a^b \left| \Upsilon(t) - \Phi_{m_{n_j} \vee m_{n_j}'}^{(n_j)}(t) \right|^2 dt \ \to \ 0
\ \ \ \  \mbox{almost surely}
$$
as $j \to \infty$,
so the statement follows.
\end{proof}

Now we are ready to prove existence and uniqueness of the solution to equation~\eqref{ak1}, along with an explicit representation formula.

\begin{theorem}\label{exunak}
The unique solution of~\eqref{ak1} and~\eqref{ak2} is
$$
S_1(t) = M f(B(T) - \sigma t) e^{\left(\mu - \sigma^2 /2 \right)t + \sigma B(t)}.
$$
\end{theorem}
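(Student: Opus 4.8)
The plan is to reduce the $L^\infty$ case to the continuous case, which is \cite[Theorem 4.8]{hksz}, via mollification, using Lemma~\ref{lemmappr} to pass to the limit inside the Ayed--Kuo integral. First I would write $E_t:=e^{(\mu-\sigma^2/2)t+\sigma B(t)}$ for brevity; it is $\{\F_t\}$-adapted and, for each fixed $\omega$, continuous and bounded on $[0,T]$. Let $\rho_n$ be a standard mollifier and $f_n:=f*\rho_{1/n}$, so each $f_n$ is continuous, $0\le f_n\le1$, and $f_n(x)\to f(x)$ at every Lebesgue point of $f$, hence for Lebesgue-a.e.\ $x\in\R$. By \cite[Theorem 4.8]{hksz} (whose growth hypothesis is vacuous here, $f_n$ being bounded), $S_1^{(n)}(t):=Mf_n(B(T)-\sigma t)\,E_t$ is the unique solution of \eqref{ak1}--\eqref{ak2} with the continuous datum $Mf_n(B(T))$; in particular $\sigma S_1^{(n)}$ is Ayed--Kuo integrable in the sense of Definition~\ref{extak} and
$$
\sigma\int_0^t S_1^{(n)}(s)\,d^*B(s)=S_1^{(n)}(t)-Mf_n(B(T))-\mu\int_0^t S_1^{(n)}(s)\,ds=:R_n(t).
$$

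Next I would apply Lemma~\ref{lemmappr} with $\Upsilon_n:=\sigma S_1^{(n)}$ and candidate limit $\Upsilon:=\sigma S_1$, $S_1(t)=Mf(B(T)-\sigma t)\,E_t$. Since $B(T)\sim\mathcal{N}(0,T)$ and $B(T)-\sigma t$ are absolutely continuous and $f_n\to f$ off a Lebesgue-null set $N\subset\R$, one gets $f_n(B(T))\to f(B(T))$ and $S_1^{(n)}(t)\to S_1(t)$ almost surely; moreover for every $\omega$ the set $\{s\in[0,t]:B(T)(\omega)-\sigma s\in N\}$ is Lebesgue-null, being the preimage of $N$ under an affine map, so dominated convergence (with dominating function a constant multiple of $E_s^2$, integrable in $s$ over $[0,t]$ for fixed $\omega$) gives $\int_0^t|\Upsilon(s)-\Upsilon_n(s)|^2\,ds\to0$ almost surely, which is condition (a), and likewise $\int_0^t S_1^{(n)}(s)\,ds\to\int_0^t S_1(s)\,ds$ almost surely. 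The latter yields $R_n(t)\to R(t):=S_1(t)-Mf(B(T))-\mu\int_0^t S_1(s)\,ds$ almost surely, hence in probability, which is condition (b). Lemma~\ref{lemmappr} then gives that $\sigma S_1$ is Ayed--Kuo integrable with $\sigma\int_0^t S_1(s)\,d^*B(s)=\lim_n R_n(t)=R(t)$ in probability, and rearranging this identity is precisely the assertion that $S_1$ solves \eqref{ak1}--\eqref{ak2}, establishing existence.

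For uniqueness I would argue as follows. If $S_1$ and $\widehat S_1$ both solve \eqref{ak1}--\eqref{ak2}, put $D:=S_1-\widehat S_1$; then $D(0)=0$ and, by linearity of the Ayed--Kuo integral (linear on finite sums of products by the Remark following Definition~\ref{extak}, and this persists under the limits of Definition~\ref{extak} via one more use of Lemma~\ref{lemmappr}), $D(t)=\mu\int_0^t D(s)\,ds+\sigma\int_0^t D(s)\,d^*B(s)$, i.e.\ $D$ solves \eqref{ak1}--\eqref{ak2} with the continuous initial datum $g\equiv0$. By the uniqueness part of \cite[Theorem 4.8]{hksz} its solution is $M\cdot0\cdot E_t\equiv0$, so $S_1=\widehat S_1$.

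The existence half is essentially bookkeeping once the mollifier is chosen; the step needing most care is ensuring that the two convergences required by Lemma~\ref{lemmappr} hold \emph{almost surely} and not merely in probability, which is exactly where the absolute continuity of the law of $B(T)$ is used (to carry the a.e.\ convergence $f_n\to f$ through the random argument $B(T)-\sigma t$), together with the boundedness of $E_s$ on compact time intervals. For uniqueness the only delicate point is that the difference of two admissible solutions is again admissible in the sense of \cite{hksz}, i.e.\ that $\sigma D$ is genuinely Ayed--Kuo integrable with integral $\sigma\int_0^tS_1\,d^*B-\sigma\int_0^t\widehat S_1\,d^*B$, which is handled by the linearity statement above.
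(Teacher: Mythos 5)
Your proposal is correct and follows essentially the same route as the paper: approximate $f$ by continuous functions, invoke the continuous-data result of \cite{hksz} for the approximants, and pass to the limit with Lemma~\ref{lemmappr}, with the identical uniqueness argument via the homogeneous equation. The only difference is that you use mollification (together with the absolute continuity of the law of $B(T)$ to carry the a.e.\ convergence $f_n\to f$ through the random argument) where the paper uses Lusin's theorem along a fixed Brownian path --- a minor technical variation that, if anything, makes the deterministic nature of the approximating sequence more transparent.
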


\begin{proof}
Using the It\^o formula for the Ayed-Kuo integral~\cite{hksz} it is possible to solve problem~\eqref{ak1}-\eqref{ak2} to find
\begin{equation}\label{solfic}
S_1(t) = M \tilde{f}(B(T) - \sigma t) e^{\left(\mu - \sigma^2 /2 \right)t + \sigma B(t)},
\end{equation}
for $\tilde{f} \in \mathcal{C}(\mathbb{R})$; in particular, it is an Ayed-Kuo integrable process.
If we fix a realization of Brownian motion then, by \textit{Lusin Theorem}~\cite{lusin}, there exists a family of continuous functions $\{f_n\}_{n \in \mathbb{N}}$ such that
\begin{eqnarray}\label{lusin}
\int_{0}^{T} |f(B(T) - \sigma t) - f_n(B(T) - \sigma t)| dt \to 0,
\end{eqnarray}
with $||f_n||_\infty \le ||f||_\infty$, as $n \to \infty$. Then it holds that
\begin{eqnarray}\nonumber
&& \int_{0}^{T} |f(B(T) - \sigma t) - f_n(B(T) - \sigma t)|^2 dt \\ \nonumber
&\le& ||f(B(T) - \sigma t) - f_n(B(T) - \sigma t)||_\infty \\ \nonumber
&& \times \int_{0}^{T} |f(B(T) - \sigma t) - f_n(B(T) - \sigma t)| dt \\ \nonumber
&\le& \left[||f(B(T) - \sigma t)||_\infty + ||f_n(B(T) - \sigma t)||_\infty \right] \\ \nonumber
&& \times \int_{0}^{T} |f(B(T) - \sigma t) - f_n(B(T) - \sigma t)| dt \\ \nonumber
&\le& 2 ||f(B(T) - \sigma t)||_\infty
\int_{0}^{T} |f(B(T) - \sigma t) - f_n(B(T) - \sigma t)| dt \\ \nonumber
&\to& 0, \qquad \text{as} \quad n \to \infty;
\end{eqnarray}
therefore we conclude
\begin{equation}\label{convl2}
\int_{0}^{T} |f(B(T) - \sigma t) - f_n(B(T) - \sigma t)|^2 dt \to 0 \qquad \text{almost surely}
\end{equation}
as $n \to \infty$.

Note that equation~\eqref{ak1} means
\begin{equation}\label{eqmeans}
S_1(t) = S_1(0) + \mu \int_0^t S_1(s) \, ds + \sigma \int_0^t S_1(s) \, d^* B(s).
\end{equation}
As
$$
S_1^{(n)}(t) = M f_n(B(T) - \sigma t) e^{\left(\mu - \sigma^2 /2 \right)t + \sigma B(t)},
$$
is a solution to~\eqref{ak1} by~\eqref{solfic}, then
$$
S_1^{(n)}(t) = S_1^{(n)}(0) + \mu \int_0^t S_1^{(n)}(s) \, ds + \sigma \int_0^t S_1^{(n)}(s) \, d^* B(s)
$$
by~\eqref{eqmeans}. Note that
\begin{eqnarray}\nonumber
\int_0^t |S_1^{(n)}(s) - S_1(s)| \, ds &=& M \int_0^t |f_n(B(T) - \sigma s) - f(B(T) - \sigma s)|
\\ \nonumber
&& \times e^{\left(\mu - \sigma^2 /2 \right)s + \sigma B(s)} \, ds \\ \nonumber
&\le& M \max_{0 \le s \le t} \left\{
e^{\left(\mu - \sigma^2 /2 \right)s + \sigma B(s)} \right\}
\\ \nonumber
&& \times \int_0^t |f_n(B(T) - \sigma s) - f(B(T) - \sigma s)| \, ds \\ \label{convl1}
&\to& 0 \qquad \text{almost surely as} \quad n \to \infty
\end{eqnarray}
by~\eqref{lusin}. In consequence
\begin{equation} \label{subseq}
S_1^{(n_j)}(t) \to S_1(t)
\qquad \text{for almost every} \quad t \in [0,T]
\end{equation}
as $j \to \infty$ by passing, if necessary, to a suitable subsequence
$\left\{S_1^{(n_j)}\right\}_{j \in \mathbb{N}} \subset \left\{S_1^{(n)}\right\}_{n \in \mathbb{N}}$.
Now, by taking the limit $j \to \infty$, we find
\begin{equation}\label{fin1}
\sigma \lim_{j \to \infty} \int_0^t S_1^{(n_j)}(s) \, d^* B(s) = S_1(t) - S_1(0) - \mu \int_0^t S_1(s) \, ds
\end{equation}
for almost every $t \in [0,T]$ almost surely, and hence in probability, by~\eqref{convl1} and~\eqref{subseq}; note that
$$
S_1(0)= M f(B(T))
$$
is well defined almost surely. Therefore
condition (b) in Lemma~\ref{lemmappr} is met.
For condition (a) compute
\begin{eqnarray}\nonumber
\int_0^t |S_1^{(n)}(s) - S_1(s)|^2 \, ds &=& M^2 \int_0^t |f_n(B(T) - \sigma s) - f(B(T) - \sigma s)|^2
\\ \nonumber
&& \times e^{2\left(\mu - \sigma^2 /2 \right)s + 2\sigma B(s)} \, ds \\ \nonumber
&\le& M^2 \max_{0 \le s \le t} \left\{
e^{2\left(\mu - \sigma^2 /2 \right)s + 2\sigma B(s)} \right\}
\\ \nonumber
&& \times \int_0^t |f_n(B(T) - \sigma s) - f(B(T) - \sigma s)|^2 \, ds \\ \label{fin2}
&\to& 0 \qquad \text{almost surely as} \quad n \to \infty
\end{eqnarray}
by~\eqref{convl2}.
Now by Lemma~\ref{lemmappr}, \eqref{fin1}, and~\eqref{fin2} we conclude
$$
S_1(t) = S_1(0) + \mu \int_0^t S_1(s) \, ds + \sigma \int_0^t S_1(s) \, d^* B(s)
$$
for almost every $t \in [0,T]$ almost surely, and the existence and explicit representation part of the proof is finished.
For the uniqueness assume on the contrary that there exist two solutions $S_1'(t)$ and $S_1''(t)$ to find
$$
S_1^{(-)}(t) = \mu \int_0^t S_1^{(-)}(s) \, ds + \sigma \int_0^t S_1^{(-)}(s) \, d^* B(s),
$$
where $S_1^{(-)}(t) := S_1'(t) - S_1''(t)$. Since this equation has the unique solution
$S_1^{(-)}(t)=0$~\cite{hksz}, we have reached a contradiction.
\end{proof}

Finally, we can establish the optimal investment strategy for the insider under Ayed-Kuo integration. For this we assume as before the no-shorting condition $0 \leq f \leq 1$ and we employ the notation
$$
M^{\text{(AK)}}(T) := S_0(T) + S_1(T).
$$

\begin{theorem}\label{mainthak}
Let $f$ be a function of $B(T)$ such that $f \in L^\infty(\mathbb{R})$ and $0 \leq f \leq 1$. The optimal investment strategy under Ayed-Kuo integration is
\begin{eqnarray}\nonumber
f(B(T)) &=& 1,
\end{eqnarray}
for model~\eqref{rode1}-\eqref{rode2} and~\eqref{ak1}-\eqref{ak2}.
\end{theorem}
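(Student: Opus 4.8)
The plan is to turn the optimization into a one-dimensional calculus problem using the explicit representation of Theorem~\ref{exunak}. First I would record the two ingredients of $M^{\text{(AK)}}(T)=S_0(T)+S_1(T)$: problem~\eqref{rode1}-\eqref{rode2} is an ordinary random differential equation with solution $S_0(t)=M(1-f(B(T)))e^{\rho t}$, while Theorem~\ref{exunak} (whose hypotheses are met because $f\in L^\infty(\mathbb{R})$) gives $S_1(t)=M f(B(T)-\sigma t)\,e^{(\mu-\sigma^2/2)t+\sigma B(t)}$. Evaluating both at $t=T$ and adding, one gets
\begin{eqnarray}\nonumber
M^{\text{(AK)}}(T) &=& M\left(1-f(B(T))\right)e^{\rho T} + M\, f(B(T)-\sigma T)\, e^{(\mu-\sigma^2/2)T+\sigma B(T)}.
\end{eqnarray}
Since $0\le f\le 1$ and the lognormal factor is integrable, $M^{\text{(AK)}}(T)\in L^1$ and taking expectations is legitimate.

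Next I would compute $\mathbb{E}[M^{\text{(AK)}}(T)]$ using $B(T)\sim\mathcal{N}(0,T)$. The only term requiring work is $\mathbb{E}[f(B(T)-\sigma T)\,e^{\sigma B(T)}]=\int_{\mathbb{R}}\frac{1}{\sqrt{2\pi T}}\,f(x-\sigma T)\,e^{\sigma x}e^{-x^2/(2T)}\,dx$. The key step is the change of variables $y=x-\sigma T$: completing the square in the exponent produces exactly $e^{\sigma^2 T/2}e^{-y^2/(2T)}$, so the integral collapses to $e^{\sigma^2 T/2}\,\mathbb{E}[f(B(T))]$, and after multiplying by $e^{(\mu-\sigma^2/2)T}$ the It\^o correction $e^{-\sigma^2 T/2}$ is cancelled, leaving the purely deterministic factor $e^{\mu T}$. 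This is the whole substance of the proof: the deterministic translation $B(T)-\sigma t$ built into the Ayed-Kuo solution undoes the exponential martingale correction, in sharp contrast with the Russo-Vallois solution of Theorem~\ref{mainthrv}, where no such cancellation occurs and the dependence on $B(T)$ survives inside the integral.

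Assembling the two terms yields
\begin{eqnarray}\nonumber
\mathbb{E}\left[M^{\text{(AK)}}(T)\right] &=& M e^{\rho T} + M\,\mathbb{E}[f(B(T))]\left(e^{\mu T}-e^{\rho T}\right),
\end{eqnarray}
which depends on $f$ only through the number $\mathbb{E}[f(B(T))]\in[0,1]$. Because $\mu>\rho$, the coefficient $e^{\mu T}-e^{\rho T}$ is strictly positive, so the expected wealth is maximized precisely when $\mathbb{E}[f(B(T))]$ is as large as the constraint $0\le f\le 1$ permits, namely when $f\equiv 1$ (Lebesgue-almost everywhere, which is all that matters here since $B(T)$ has a density); this gives the asserted optimal strategy $f(B(T))=1$. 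I do not anticipate a genuine obstacle: the delicate analytic point --- validity of the representation formula beyond continuous $f$ --- has already been absorbed into Theorem~\ref{exunak} and Lemma~\ref{lemmappr}, and the interchange of expectation with the change of variables is immediate from Tonelli's theorem thanks to the boundedness of $f$ and the integrability of the Gaussian-times-exponential weight.
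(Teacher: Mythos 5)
Your proposal is correct and follows essentially the same route as the paper's proof: invoke Theorem~\ref{exunak} for the explicit solution, take expectations against the Gaussian density, perform the shift $y = x - \sigma T$ so that completing the square cancels the It\^o correction and leaves $e^{\mu T}\,\mathbb{E}[f(B(T))]$, and then maximize the resulting convex combination using $\mu > \rho$. The only (welcome) addition is your explicit remark on integrability justifying the interchange, which the paper leaves implicit.
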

\begin{proof}
By Theorem~\ref{exunak} we can solve problem~\eqref{ak1}-\eqref{ak2} explicitly to find
\begin{subequations}
\begin{eqnarray}\nonumber
S_0(t) &=& M \left(1-f(B(T))\right) e^{\rho t} \\ \nonumber
S_1(t) &=& M f(B(T) - \sigma t) e^{\left(\mu - \sigma^2 /2 \right)t + \sigma B(t)}.
\end{eqnarray}
\end{subequations}
Our aim is to find the strategy $f$ such that $\mathbb{E}\left[M^{\text{(AK)}}(T)\right]$ is maximized. Hence, we have
\begin{eqnarray}\nonumber
\mathbb{E}\left[M^{\text{(AK)}}(T)\right] &=& \mathbb{E}\left[S_0(T)\right] + \mathbb{E}\left[S_1(T)\right] \\ \nonumber
&=& M \left( 1- \mathbb{E}\left[f(B(T))\right]\right) e^{\rho T} \\ \nonumber
& & + M \mathbb{E}\left[f(B(T) - \sigma T)e^{\sigma B(T)}\right] e^{\left(\mu - \sigma^2/2\right)T} \\ \nonumber
&=& M e^{\rho T} \left( 1 - \int_{-\infty}^{\infty} \frac{1}{\sqrt{2 \pi T}} f(x) e^{-\frac{x^2}{2T}}dx \right) \\ \nonumber
& & + Me^{\left(\mu - \sigma^2 /2 \right)T} \left( \int_{-\infty}^{\infty} \frac{1}{\sqrt{2 \pi T}} f(x-\sigma T) e^{-\frac{x^2}{2T}} e^{\sigma x} dx \right),
\end{eqnarray}
where we have used that $B(T) \sim \mathcal{N}(0,T)$. By changing variables
\begin{eqnarray}\nonumber
    y &=& x - \sigma T,
\end{eqnarray}
and
\begin{eqnarray}\nonumber
    \Bar{M}(T) &=& \frac{\mathbb{E}\left[M^{\text{(AK)}}(T)\right]}{M},
\end{eqnarray}
where the first change is implemented in the second integral only,
we find
\begin{eqnarray}\nonumber
\Bar{M}(T) &=& e^{\rho T} \left( 1 - \int_{-\infty}^{\infty} \frac{1}{\sqrt{2 \pi T}} f(x) e^{-\frac{x^2}{2T}}dx \right) \\ \nonumber
&& +e^{\left(\mu - \sigma^2 /2 \right)T} \left( \int_{-\infty}^{\infty} \frac{1}{\sqrt{2 \pi T}} f(x-\sigma T) e^{-\frac{x^2}{2T}} e^{\sigma x} dx \right) \\ \nonumber
&=& e^{\rho T} - e^{\rho T} \int_{-\infty}^{\infty} \frac{1}{\sqrt{2 \pi T}} f(x) e^{-\frac{x^2}{2T}} dx \\ \nonumber
&& +\int_{-\infty}^{\infty} \frac{1}{\sqrt{2 \pi T}} f(y) e^{-\frac{\left(y + \sigma T \right)^2}{2T}} e^{\left(\mu - \sigma ^2 /2 \right)T} e^{\sigma \left(y+\sigma T\right)}dy \\ \nonumber
&=& e^{\rho T} - e^{\rho T} \int_{-\infty}^{\infty} \frac{1}{\sqrt{2 \pi T}} f(x) e^{-\frac{x^2}{2T}} dx \\ \nonumber
&& +\int_{-\infty}^{\infty} \frac{1}{\sqrt{2 \pi T}} f(y) e^{-\frac{\left(y^2 + \sigma^2 T^2 +2y\sigma T \right)}{2T}} e^{\mu T - \sigma ^2 T /2} e^{\sigma y+\sigma^2 T}dy \\ \nonumber
&=& e^{\rho T} - e^{\rho T} \int_{-\infty}^{\infty} \frac{1}{\sqrt{2 \pi T}} f(x) e^{-\frac{x^2}{2T}} dx + e^{\mu T} \int_{-\infty}^{\infty} \frac{1}{\sqrt{2 \pi T}} f(y) e^{-\frac{y^2}{2T}} dy.
\end{eqnarray}
Therefore, we may summarize this as
\begin{eqnarray}\nonumber
    \Bar{M}(T) &=& e^{\rho T} - e^{\rho T} \mathbb{E}\left[f(B(T))\right] + e^{\mu T} \mathbb{E}\left[f(B(T))\right].
\end{eqnarray}
By assumption, $f$ is a function of $B(T)$ such that $0 \leq f \leq 1$, and $\mu > \rho$; so we have a convex linear combination and, since the exponential function is strictly monotone, we conclude $\mathbb{E}\left[M^{\text{(AK)}}(T)\right] \in [Me^{\rho T}, Me^{\mu T}]$. Then clearly $\mathbb{E}\left[M^{\text{(AK)}}(T)\right]$ is maximized whenever
\begin{eqnarray}\nonumber
    \mathbb{E}\left[f(B(T))\right] &=& 1.
\end{eqnarray}
Equivalently we have
\begin{eqnarray}\nonumber
    \frac{1}{\sqrt{2 \pi T}} \int_{-\infty}^{\infty} f(x) e^{-\frac{x^2}{2T}} dx &=& 1,
\end{eqnarray}
and consequently $f(B(T)) = 1$.
\end{proof}

\begin{remark}\label{maincolak}
The same conclusion found in Theorem~\ref{mainthak} can be reached by approximating $\mathbb{E}\left[f(B(T))\right]$
instead of $f$ (as done in Theorem~\ref{exunak}); let us show how.
We start considering $\{f_n\}_{n=1}^\infty$, a sequence of functions such that $f_n \in \mathcal{C}(\mathbb{R})$ for all $n \in \mathbb{N}$. Then, we have
\begin{eqnarray}\nonumber
    | \mathbb{E}\left[f(B(T))\right] - \mathbb{E}\left[f_n(B(T))\right] | &=& | \mathbb{E}\left[f(B(T)) - f_n(B(T)) \right] | \\ \nonumber
    &\leq& \mathbb{E}\left[| f(B(T)) - f_n (B(T)) | \right] \\ \nonumber
    &=& \frac{1}{\sqrt{2 \pi T}} \int_{-\infty}^{\infty} |f(x) - f_n(x)| e^{-\frac{x^2}{2T}} dx.
\end{eqnarray}
By \textit{Lusin Theorem}~\cite{lusin}, there exists a family $\{f_n\}_{n \in \mathbb{N}}$ of continuous functions,
$||f_n||_\infty \le ||f||_\infty$, such that
\begin{eqnarray}\nonumber
    \frac{1}{\sqrt{2 \pi T}} \int_{-\infty}^{\infty} |f(x) - f_n(x) | e^{-\frac{x^2}{2T}} dx \to 0
\end{eqnarray}
as $n \to \infty$, and where $0 \le f_n \le 1$ after a possible redefinition of their negative parts by zero,
so they constitute a well defined family of investment allocations under the no-shorting condition. In consequence
\begin{eqnarray}\nonumber
    \mathbb{E}\left[f_n(B(T))\right] &\to& \mathbb{E}\left[f(B(T))\right]
\end{eqnarray}
as $n \to \infty$. Since we found in the proof of Theorem~\ref{mainthak} that
\begin{eqnarray}\nonumber
    \mathbb{E}\left[M^{(AK)}(T)\right] &=& e^{\rho T} - e^{\rho T} \mathbb{E}\left[f(B(T))\right] + e^{\mu T} \mathbb{E}\left[f(B(T))\right],
\end{eqnarray}
we may conclude
\begin{eqnarray}\nonumber
    \mathbb{E}\left[M^{(AK)}(T)\right] &=& \lim_{n \to \infty}
    \left\{ e^{\rho T} - e^{\rho T} \mathbb{E}\left[f_n(B(T))\right] + e^{\mu T} \mathbb{E}\left[f_n(B(T))\right]
    \right\}.
\end{eqnarray}
So the same optimal investment is found by considering $f \in \mathcal{C}(\mathbb{R})$ in~\eqref{ak2} (so the existence, uniqueness, and explicit representation theory of~\cite{hksz} can be employed for the solution of~\eqref{ak1}) and then arguing by approximation as exposed in this Remark. Note anyway that this fact just illustrates the consistency of our approach overall, but the only complete proof is the one that goes through Theorem~\ref{exunak}.
\end{remark}

\begin{remark}
The function $f$ from Theorem~\ref{mainthak} suggests that the formalization of the problem based on the Ayed-Kuo integral does not take advantage of the anticipating initial condition, as this optimal investment strategy is the same as the one of the honest trader.
Indeed, it implies to invest the whole amount $M$ in the stock, in such a way that
\begin{eqnarray}\nonumber
    \mathbb{E}\left[M^{(AK)}(T)\right] &=& M e^{\mu T}.
\end{eqnarray}
Thus, the result of the use of the Ayed-Kuo integral seems to be counterintuitive in the financial sense, at least for this formulation of the insider trading problem.
\end{remark}

\section{The Hitsuda-Skorokhod integral}\label{hsi}

The Hitsuda-Skorokhod integral is an anticipating stochastic integral that was introduced by Hitsuda~\cite{hitsuda} and Skorokhod~\cite{skorokhod}
by means of different methods.
The following definition makes use of the Wiener-It\^o chaos expansion; background on this topic can be found for instance in~\cite{noep,hoeuz}.

\begin{definition}
Let $X \in L^2([0,T]\times \Omega)$ be a square integrable stochastic process. By the Wiener-It\^o chaos expansion, $X$ can be decomposed into an orthogonal series
$$X(t,\omega) = \sum_{n=0}^{\infty} I_n(f_{n,t}),$$
in $L^2(\Omega)$, where $f_{n,t}\in L^2([0,T]^n)$ are symmetric functions for all non-negative integers $n$. Thus, we write
$$ f_{n,t}(t_1,\ldots,t_n)=f_n(t_1,\ldots,t_n,t),$$
which is a function defined on $[0,T]^{n+1}$ and symmetric with respect to the first $n$ variables.
The symmetrization of $f_n(t_1,\ldots,t_n,t_{n+1})$ is given by
\begin{align*} \label{simetrizacion}
&\f_n(t_1,\ldots,t_{n+1})= \\
&\frac{1}{n+1}\left[f_n(t_1,\ldots,t_{n+1})+ f_n(t_{n+1},t_2,\ldots,t_1)+\ldots+f_n(t_1,\ldots,t_{n+1},t_{n})\right],
\end{align*}
because we only need to take into account the permutations which exchange the last variable with any other one.
Then, the Hitsuda-Skorokhod integral of $X$ is defined by
\begin{equation*}
\int_0^T X(t,\omega) \, \delta B(t) := \sum_{n=0}^{\infty} I_{n+1}(\f_n),
\end{equation*}
provided that the series converges in $L^2(\Omega)$.
\end{definition}

For the Hitsuda-Skorokhod integral we arrive at the initial value problem
\begin{subequations}
\begin{eqnarray}\label{sko1}
\delta S_1 &=& \mu \, S_1 \, d t + \sigma \, S_1 \, \delta B_t \\ \label{sko2}
S_1(0) &=& M f(B(T)),
\end{eqnarray}
\end{subequations}
for a Hitsuda-Skorokhod stochastic differential equation, where $\delta$ denotes the Hitsuda-Skorokhod stochastic differential.
The existence and uniqueness theory for linear stochastic differential equations of Hitsuda-Skorokhod type, which covers the present case,
can be found in~\cite{lssdes}; as before, $f$ is a function of $B(T)$, such that $f \in L^{\infty}(\mathbb{R})$ and $0 \leq f \leq 1$. In this case we have the following result.

\begin{theorem}\label{mainthhs}
The unique solution of~\eqref{sko1} and~\eqref{sko2} is
$$
S_1(t) = M f(B(T) - \sigma t) e^{\left(\mu - \sigma^2 /2 \right)t + \sigma B(t)}.
$$
\end{theorem}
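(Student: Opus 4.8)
The plan is to turn the statement into a verification. The paper already recalls that \cite{lssdes} provides existence and uniqueness for linear Hitsuda--Skorokhod equations of the type \eqref{sko1}--\eqref{sko2} in the present class, so it is enough to exhibit one solution and to check that it coincides with the claimed expression; uniqueness then does the rest. Concretely, I would show that
\[
S_1(t):=M\,f\big(B(T)-\sigma t\big)\,e^{(\mu-\sigma^2/2)t+\sigma B(t)}
\]
satisfies $S_1(t)=S_1(0)+\mu\int_0^tS_1(s)\,ds+\sigma\int_0^tS_1(s)\,\delta B(s)$ for almost every $t\in[0,T]$. Following the pattern of Theorem~\ref{exunak}, I would do this first for a smooth (or merely continuous) $f$, where the representation is transparent, and then pass to $f\in L^\infty(\mathbb{R})$ by approximation, the closedness of the Skorokhod integral on $L^2([0,T]\times\Omega)$ playing here the role that Lemma~\ref{lemmappr} played for the Ayed--Kuo integral.

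For the smooth case the crucial observation is that the shift $B(T)\mapsto B(T)-\sigma t$ is exactly the effect of a Wick product with the adapted exponential martingale. Let $Z(t):=e^{(\mu-\sigma^2/2)t+\sigma B(t)}$, which solves $dZ=\mu Z\,dt+\sigma Z\,dB(t)$ in the It\^o sense (hence also in the Skorokhod sense, $Z$ being adapted), and note that $e^{-\mu t}Z(t)=\mathcal{E}\big(\sigma\mathbbm{1}_{[0,t]}\big)$ is a normalized Gaussian exponential. Wick multiplication by $\mathcal{E}(h)$ amounts to a Cameron--Martin translation of the remaining randomness, and with $h=\sigma\mathbbm{1}_{[0,t]}$ this translation sends $B(T)$ to $B(T)-\sigma t$; in particular $f(B(T))\diamond e^{-\mu t}Z(t)=f(B(T)-\sigma t)\,e^{-\mu t}Z(t)$, an identity one checks first for $f$ an exponential (and hence a polynomial) and then extends. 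Consequently $S_1(t)=\big(M f(B(T))\big)\diamond Z(t)=S_1(0)\diamond Z(t)$, and Wick-multiplying the It\^o equation for $Z$ by the fixed random variable $S_1(0)$ and using that the Wick product commutes both with Lebesgue integration in $t$ and with the Skorokhod integral, $\delta\big(S_1(0)\diamond Z\,\mathbbm{1}_{[0,t]}\big)=S_1(0)\diamond\delta\big(Z\,\mathbbm{1}_{[0,t]}\big)$, one obtains precisely the integral form of \eqref{sko1} with initial value \eqref{sko2}. (Alternatively, this smooth case may simply be quoted from the linear theory in \cite{lssdes}.)

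To remove the regularity of $f$ I would argue as in Theorem~\ref{exunak} and Remark~\ref{maincolak}: by \emph{Lusin Theorem}~\cite{lusin} pick $f_n\in\mathcal{C}(\mathbb{R})$ with $\|f_n\|_\infty\le\|f\|_\infty$ and $f_n\to f$ in $L^1$ with respect to the law of $B(T)$, so that $S_1^{(n)}(s):=Mf_n(B(T)-\sigma s)\,e^{(\mu-\sigma^2/2)s+\sigma B(s)}\to S_1(s)$ in $L^2([0,T]\times\Omega)$ by dominated convergence, the dominating process $e^{(\mu-\sigma^2/2)s+\sigma B(s)}$ lying in $L^2([0,T]\times\Omega)$ since $\mathbb{E}\big[e^{2\sigma B(s)}\big]=e^{2\sigma^2 s}$. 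By the smooth case each $S_1^{(n)}$ solves the equation, hence $\delta\big(S_1^{(n)}\mathbbm{1}_{[0,t]}\big)=S_1^{(n)}(t)-S_1^{(n)}(0)-\mu\int_0^tS_1^{(n)}(s)\,ds$, and the right-hand side converges in $L^2(\Omega)$ to $S_1(t)-S_1(0)-\mu\int_0^tS_1(s)\,ds$. Since the divergence operator $\delta$ is closed as a map $L^2([0,T]\times\Omega)\to L^2(\Omega)$ (see~\cite{noep,hoeuz}), it follows that $S_1\mathbbm{1}_{[0,t]}\in\mathrm{Dom}(\delta)$ and $\delta\big(S_1\mathbbm{1}_{[0,t]}\big)=S_1(t)-S_1(0)-\mu\int_0^tS_1(s)\,ds$, that is, $S_1$ solves \eqref{sko1}--\eqref{sko2}; uniqueness from \cite{lssdes} then identifies it as the solution.

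The step I expect to be the main obstacle is the smooth-$f$ verification, specifically justifying the Wick-translation identity $f(B(T))\diamond\mathcal{E}(\sigma\mathbbm{1}_{[0,t]})=f(B(T)-\sigma t)\,\mathcal{E}(\sigma\mathbbm{1}_{[0,t]})$ beyond exponentials and, above all, the commutation $\delta(F\diamond u)=F\diamond\delta(u)$, together with checking the domain and integrability conditions under which all of this is legitimate (boundedness of $f$ and of its derivatives translating into the required $L^2$ bounds on the relevant Malliavin derivatives). Once closedness of $\delta$ is available, the approximation step is routine and essentially duplicates the Ayed--Kuo argument of Theorem~\ref{exunak}.
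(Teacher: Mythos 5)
Your proposal is correct in outline, but it takes a genuinely different and much longer route than the paper. The paper's proof is a one-step application of the representation formula of Buckdahn~\cite{buckdahn}: for the linear Skorokhod equation the solution is $Y_t=\eta(U_{0,t})\exp[\int_0^t\mu_s(U_{s,t})\,ds]X_t$ with $U_{0,t}$ the Girsanov shift $B(u)\mapsto B(u)-\int_0^u\mathbbm{1}_{[0,t]}(r)\sigma\,dr$, and the hypothesis on the initial condition is only $\eta\in L^p(\Omega)$ for some $p>2$ --- which $\eta=Mf(B(T))$ satisfies trivially since $|\eta|\le M$. Specializing to constant $\mu,\sigma$ gives $\eta(U_{0,t})=Mf(B(T)-\sigma t)$ and the claimed formula at once. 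The crucial point you missed is that, unlike the Ayed--Kuo theory of~\cite{hksz} (stated only for continuous $f$, which is why Theorem~\ref{exunak} needs Lusin's theorem and Lemma~\ref{lemmappr}), the linear Skorokhod theory already covers merely measurable bounded initial data, so your entire approximation layer --- Lusin, dominated convergence in $L^2([0,T]\times\Omega)$, closedness of $\delta$ --- is superfluous here. That layer is nevertheless sound as written (passing to a subsequence so that $f_n\to f$ a.e.\ makes the dominated-convergence step work, and $\delta$ is indeed closed as the adjoint of the Malliavin derivative). Your smooth-case verification via Wick calculus is also essentially correct: the Gjessing-type identity $f(B(T))\diamond\mathcal{E}(\sigma\mathbbm{1}_{[0,t]})=f(B(T)-\sigma t)\,\mathcal{E}(\sigma\mathbbm{1}_{[0,t]})$ has the right shift (it is exactly the $U_{0,t}$ appearing in Buckdahn's formula), and the commutation of $\delta$ with Wick multiplication by a fixed random variable is the standard mechanism behind that formula. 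But this is precisely the work the citation disposes of: the ``main obstacle'' you flag (domains for the Wick-translation identity and for $\delta(F\diamond u)=F\diamond\delta(u)$) is the content of~\cite{buckdahn,lssdes}, so re-deriving it buys self-containedness at the cost of redoing the hardest part of a quoted theorem. If you keep your structure, you should either fully justify those two Wick identities in $L^2$ or, as you parenthetically suggest, quote the linear theory --- at which point the proof collapses to the paper's.
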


\begin{proof}
The statement is a particular case of the existence and uniqueness result in~\cite{buckdahn}, which in turn states that the solution to stochastic differential equations of the form
\begin{equation*}\label{buckdahn1}
\delta Y_t = \mu_t \, Y_t \, dt + \sigma_t \, Y_t \, \delta B_t, \quad Y_0=\eta, \qquad 0 \leq t \leq T,
\end{equation*}
with $\sigma_t \in L^{\infty}([0,1])$, $\mu_t \in L^{\infty}([0,1]\times \Omega)$, and $ \eta  \in L^p(\Omega)$, $p > 2$ is given by
\begin{equation}\label{buckdahn2}
Y_t = \eta(U_{0,t}) \, \exp \left[ \int_0^t \mu_s(U_{s,t}) \, ds \right] X_t,  \ \ \ \
\mbox{almost surely},  \ \ \ \ 0 \leq t \leq T,
\end{equation}
where
$$
X_t = \exp \left( \int_0^t \sigma_s \, \delta B_s - \frac{1}{2} \int_0^t \sigma_s^2 \, ds \right), \qquad 0 \leq t \leq T,
$$
and, for $0\leq s \leq t \leq T$, the Girsanov transformation is
$$
U_{s,t} = B(u) - \int_0^u \1_{[s,t]}(r) \, \sigma_r \, dr, \ \ \ \ 0 \leq u \leq T.
$$
Now taking $\sigma_s=\sigma$ and $\mu_s=\mu$ as two constant processes, and $\eta=M f(B(T))$,
the different factors in $\eqref{buckdahn2}$ become
\begin{eqnarray}\nonumber
e^{\mu t} &=& \exp\left[ \int_0^t \mu_s(U_{s,t}) ds \right], \\ \nonumber
X_t &=& \exp\left[\sigma B(t) - \sigma^2 t /2 \right], \\ \nonumber
\eta(U_{0,t}) &=& M f(B(T)-\sigma t).
\end{eqnarray}
Since
$$
\mathbb{E} \left( \left|\eta\right|^p \right) \le \mathbb{E} (M^p) = M^p < \infty,
$$
the result follows.
\end{proof}

\begin{corollary}
Let $f$ be a function of $B(T)$ such that $f \in L^{\infty}(\mathbb{R})$ and $0 \leq f \leq 1$. The optimal investment strategy under Hitsuda-Skorokhod integration is
\begin{eqnarray}\nonumber
f(B(T)) &=& 1,
\end{eqnarray}
for model~\eqref{rode1}-\eqref{rode2} and~\eqref{sko1}-\eqref{sko2}.
\end{corollary}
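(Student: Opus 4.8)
The plan is to mimic almost verbatim the computation carried out in the proof of Theorem~\ref{mainthak}, since the explicit solution obtained in Theorem~\ref{mainthhs} has precisely the same functional form as the Ayed--Kuo solution. First I would record that, together with the bank-account equation~\eqref{rode1}--\eqref{rode2}, we have
$$
S_0(t) = M\left(1 - f(B(T))\right) e^{\rho t}, \qquad S_1(t) = M f(B(T)-\sigma t)\, e^{\left(\mu - \sigma^2/2\right)t + \sigma B(t)},
$$
and set $M^{\text{(HS)}}(T) := S_0(T) + S_1(T)$.

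Next I would compute $\mathbb{E}\left[M^{\text{(HS)}}(T)\right]$ by using $B(T)\sim\mathcal{N}(0,T)$ to turn the two expectations into Gaussian integrals, then apply the change of variables $y = x - \sigma T$ in the integral coming from $\mathbb{E}\left[S_1(T)\right]$ and complete the square in the exponent. Exactly as in the proof of Theorem~\ref{mainthak}, this collapses to
$$
\frac{\mathbb{E}\left[M^{\text{(HS)}}(T)\right]}{M} = e^{\rho T} - e^{\rho T}\,\mathbb{E}\left[f(B(T))\right] + e^{\mu T}\,\mathbb{E}\left[f(B(T))\right].
$$

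Finally, since $0 \le f \le 1$ gives $\mathbb{E}\left[f(B(T))\right] \in [0,1]$ and $\mu > \rho$, the right-hand side is a strictly increasing affine function of $\mathbb{E}\left[f(B(T))\right]$, hence maximized precisely when $\mathbb{E}\left[f(B(T))\right] = 1$; because the Gaussian density is strictly positive, this forces $f(x) = 1$ for Lebesgue-almost every $x$, i.e. $f(B(T)) = 1$ almost surely. I do not expect a genuine obstacle here: the only mildly delicate point is justifying the change of variables together with the interchange of expectation and Gaussian integral, but this is identical to what was already done for the Ayed--Kuo integral, and the integrability needed follows from $f \in L^{\infty}(\mathbb{R})$ and the finiteness of the Gaussian exponential moments. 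Alternatively, one may invoke the approximation argument of Remark~\ref{maincolak} verbatim to reduce to $f \in \mathcal{C}(\mathbb{R})$ and thereby stay within the theory of~\cite{buckdahn}.
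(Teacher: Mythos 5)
Your proposal is correct and coincides with the paper's argument: the paper proves this corollary simply by observing that Theorem~\ref{mainthhs} gives the same explicit solution as in the Ayed--Kuo case, so the computation in the proof of Theorem~\ref{mainthak} applies verbatim, which is exactly what you carry out. The only difference is that you write out the Gaussian computation explicitly rather than citing it, which is harmless.
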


\begin{proof}
The statement is a direct consequence of Theorem~\ref{mainthhs} and the proof of Theorem~\ref{mainthak}.
\end{proof}

\begin{remark}
Note that this result is the same as the one found in the previous section for the Ayed-Kuo integral,
that is
\begin{eqnarray}\nonumber
    \mathbb{E}\left[M^{(HS)}(T)\right] &=& M e^{\mu T}.
\end{eqnarray}
Therefore the same financial conclusions hold in this case as well.
\end{remark}

\section{Further results and comments}\label{frc}

In this section we address some questions that complement our previous developments.
First, let us consider the maximization of $\mathbb{E}\left[M(T) | B(T) \right]$ rather than the maximization of $\mathbb{E}\left[M(T)\right]$. This quantity is no longer a real number, but a random variable, so the
optimization problem can only be realized by means of the introduction of a notion of stochastic ordering.
Herein we will assume stochastic orderings given by different orders of stochastic dominance~\cite{levy}; although
other notions of stochastic order are possible~\cite{ss}, we base our decision on the fact that first order stochastic dominance
is the usual stochastic order.
The simplest situation is the one corresponding to the Russo-Vallois forward integral, that is,
to the problem analyzed in section~\ref{rvi};
we remind that $f$ is a function of $B(T)$ such that $f \in L^{\infty}(\mathbb{R})$ and $0 \leq f \leq 1$.
In this case, the conditional expectation of the insider wealth is
\begin{eqnarray}\nonumber
\mathbb{E}\left[M^{\text{(RV)}}(T) | B(T)\right]
&=& M \left[ \left( 1- f(B(T)) \right) e^{\rho T} + f(B(T)) e^{\left(\mu - \sigma^2/2\right)T + \sigma B(T)} \right]
\end{eqnarray}
almost surely, since $M^{\text{(RV)}}(T)$ is measurable by the sigma field generated by $B(T)$.
The last expression is nothing but a convex linear combination and, since the exponential function is strictly monotone,
we conclude that the optimal investment strategy is
\begin{eqnarray}\nonumber
    f(B(T)) &=& \mathlarger{\mathlarger{\mathbbm{1}}}_{\big\lbrace B(T) > \frac{T}{\sigma} \left( \rho - \mu + \frac{\sigma^2}{2} \right ) \big\rbrace}
\end{eqnarray}
almost surely. This maximizer coincides with the one provided by Theorem~\ref{mainthrv}.
Note that the notion of stochastic order employed is zeroth order stochastic dominance, which is the strongest notion
of stochastic dominance and implies all the higher orders~\cite{levy}.

In the case of either the Ayed-Kuo or the Hitsuda-Skorokhod integral,
see sections~\ref{aki} and~\ref{hsi} respectively, we find
\begin{eqnarray}\nonumber
\mathbb{E}\left[M^{\text{(AK/HS)}}(T) | B(T)\right] &=&
M \left[ \left( 1- f(B(T)) \right) e^{\rho T} \right. \\ \nonumber
&& \left. + f(B(T)-\sigma T) e^{\left(\mu - \sigma^2/2\right)T + \sigma B(T)} \right]
\end{eqnarray}
almost surely, since $M^{\text{(AK/HS)}}(T)$ is measurable by the sigma field generated by $B(T)$.
Since $\mathbb{E}\left[M^{\text{(AK/HS)}}(T)\right]$ is maximized for $f \equiv 1$, this should be the maximizer
in this case too, of course provided a maximizer exists~\cite{levy}. Note however that
\begin{eqnarray}\nonumber
\mathbb{E} \left[ \left. M^{\text{(AK/HS)}}(T) \right|_{f \equiv 1} \right] &=& e^{\mu T} \\ \nonumber
&>& e^{\rho T} \\ \nonumber
&=& \mathbb{E} \left[ \left. M^{\text{(AK/HS)}}(T) \right|_{f \equiv 0} \right],
\end{eqnarray}
by the assumption $\mu > \rho$ and the monotonicity of the exponential; but nevertheless
\begin{eqnarray}\nonumber
\inf \left\{ \left. M^{\text{(AK/HS)}}(T) \right|_{f \equiv 1} \right\} &=& 0 \\ \nonumber
&<& e^{\rho T} \\ \nonumber
&=& \min \left\{ \left. M^{\text{(AK/HS)}}(T) \right|_{f \equiv 0} \right\}.
\end{eqnarray}
These two results combined imply that $\left. M^{\text{(AK/HS)}}(T) \right|_{f \equiv 1}$
does not stochastically dominate $\left. M^{\text{(AK/HS)}}(T) \right|_{f \equiv 0}$
in the $m^{\text{th}}-$order for any $m=0,1,2,3,\cdots$~\cite{levy}. In other words, assuming a stochastic ordering
given by the $m^{\text{th}}-$order stochastic dominance, there exists no optimal value for any $m=0,1,2,3,\cdots$.
On one hand, this should not be regarded as a bizarre outcome since stochastic orders are in general partial orders.
On the other hand, it seems possible to turn $f \equiv 1$ into the maximizer of the problem by means of the introduction
of a suitable risk-seeking stochastic order. In any case, the obvious difference between the results presented in this section,
along with the fact that
$$
\mathbb{E} \left[ \left. M^{\text{(AK/HS)}}(T) \right|_{f \equiv 1} \right] <
\mathbb{E} \left[ \left. M^{\text{(RV)}}(T) \right|_{f=
\mathlarger{\mathlarger{\mathbbm{1}}}_{\big\lbrace B(T) > \frac{T}{\sigma} \left( \rho - \mu + \frac{\sigma^2}{2} \right )
\big\rbrace}} \right],
$$
both point in the same direction of support of the Russo-Vallois forward integral,
at least under the particular conditions we are considering herein.

It is also important to clarify that the insider problem we have addressed is solvable by means of much simpler methods that
do not require the introduction of anticipating stochastic integration; a fact that
we anticipated in section~\ref{insider}.
To see this consider our basic model subject to unit initial conditions, that is
\begin{subequations}
\begin{eqnarray}\label{fr0}
d S_0 &=& \rho \, S_0 \, d t \\ \label{fr1}
S_0(0) &=& 1,
\end{eqnarray}
\end{subequations}
and
\begin{subequations}
\begin{eqnarray}\label{fs0}
d S_1 &=& \mu \, S_1 \, d t + \sigma \, S_1 \, d B(t) \\ \label{fs1}
S_1(0) &=& 1.
\end{eqnarray}
\end{subequations}
Clearly~\eqref{fs0}-\eqref{fs1} is an It\^o stochastic differential equation and~\eqref{fr0}-\eqref{fr1} is an ordinary differential equation. With these two assets one can build the portfolio
\begin{eqnarray}\nonumber
M(t) &=& M_0 S_0(t) + M_1 S_1(t)
\\ \nonumber
&=& M \left(1-f\right) S_0(t) + M f S_1(t) \\ \label{solito}
&=& M \left[ \left( 1- f \right) e^{\rho T} + f e^{\left(\mu - \sigma^2/2\right)T + \sigma B(T)} \right],
\end{eqnarray}
where we have assumed that the total initial wealth $M=M_0 + M_1$ is constant and $f$ denotes the fraction of the total wealth
invested in the stock. The computation of the expected final wealth depends on whether $f$ is a constant or
we allow $f=f(B(T))$. In the first case we find
\begin{eqnarray}\nonumber
\mathbb{E}\left[M(T)\right]
&=& M\left(1-f\right) \, e^{\rho T} + M \, f \, e^{\mu T},
\end{eqnarray}
that is, our classical convex linear combination, which gets maximized for
\begin{eqnarray}\nonumber
    f &=& 1.
\end{eqnarray}
Of course, we find agreement with the corresponding result in section~\ref{introduction}.
In the latter case we get
\begin{eqnarray}\nonumber
\mathbb{E}\left[M(T)\right] &=& M \left( 1- \mathbb{E}\left[f(B(T))\right] \right) e^{\rho T} +
M \mathbb{E}\left[ f(B(T)) e^{\sigma B(T)} \right] e^{\left(\mu - \sigma^2/2\right)T},
\end{eqnarray}
in agreement with our development based on the Russo-Vallois integral, see section~\ref{rvi};
as there we conclude the optimizer is
\begin{eqnarray}\label{maxito}
    f(B(T)) &=& \mathlarger{\mathlarger{\mathbbm{1}}}_{\big\lbrace B(T) > \frac{T}{\sigma} \left( \rho - \mu + \frac{\sigma^2}{2} \right ) \big\rbrace}.
\end{eqnarray}
This computation allows us to highlight two things. First, we find yet another argument in favor of the forward integral (as always, in our limited setting). Second, it is a reminder of the fact that our goal is to establish a comparison, as clear as possible, between the
use of different anticipating stochastic integrals in a financial context. To this end we need a simple enough problem, which solution
is not the objective of this work.

Finally, let us remark that our selection of the possible interpretations of noise is not exhaustive.
One could consider, for instance, pathwise integration theories such as the one by F\"ollmer~\cite{follmer}
or Rough Path Theory (with It\^o enhanced Brownian motion in our case)~\cite{fh}. Since these theories replicate
It\^o calculus, their use will presumably
lead to the same result as the Russo-Vallois forward integral. Moreover,
these theories are able to replicate neither Ayed-Kuo nor Hitsuda-Skorokhod calculus~\cite{fh}.
On the other hand, expressions such as~\eqref{solito} make appealing the use of a pathwise theory in the
context of the present problem since, as we have discussed at the beginning of this section, the strategy
given by~\eqref{maxito} maximizes the wealth of the insider not just in mean but almost surely. At this point,
however, it is convenient to realize again that the problem under consideration is a simple model which permits
a full comparison between the three anticipating integrals. In more complex problems, for instance with partial
insider information, it seems to be too optimistic to hope for almost surely optimal strategies.
Note that an akin situation arises in the case treated in the second paragraph of this section. In such general cases,
in which some type of average or suitable stochastic ordering seems to be needed, it is perhaps too restrictive
to ask for a purely analytical solution to the optimization problem.

\section{Conclusions}
In this work we have considered a version of insider trading that allows us to compare the optimal investment strategies provided by three different anticipating stochastic integrals: the Russo-Vallois forward, the Ayed-Kuo and the Hitsuda-Skorokhod integrals.

Specifically, we have considered the following formulation of insider trading for the bank account
\begin{subequations}
\begin{eqnarray*}\label{rode61}
d S_0 &=& \rho \, S_0 \, d t \\ \label{rode62}
S_0(0) &=& M \left(1-f(B(T))\right),
\end{eqnarray*}
\end{subequations}
and for the stock
\begin{subequations}
\begin{eqnarray*}\label{s61}
\dj S_1 &=& \mu \, S_1 \, d t + \sigma \, S_1 \, \dj B(t) \\ \label{s610}
S_1(0) &=& M f(B(T)),
\end{eqnarray*}
\end{subequations}
where the anticipating initial condition $f$ is a function of $B(T)$, such that $f \in L^{\infty}(\mathbb{R})$ and $0 \leq f \leq 1$,
and $\dj \in \{d^-,d^*,\delta\}$ is a stochastic differential of one of the types under consideration. That is, we have only considered
buy-and-hold strategies for which shorting is not allowed.

Our main task has been to establish the optimal investment strategy for each of the anticipating integrals. When the choice is the Russo-Vallois integral, the investment strategy that maximizes the expected wealth is to invest the whole amount $M$ in the asset whose actual value is larger at maturity time. Thus,
\begin{eqnarray}\nonumber
    f(B(T)) &=& \mathlarger{\mathlarger{\mathbbm{1}}}_{\big \lbrace B(T) > \frac{T}{\sigma} \left( \rho - \mu + \frac{\sigma^2}{2} \right) \big \rbrace }.
\end{eqnarray}
For the Ayed-Kuo and the Hitsuda-Skorokhod integrals, the optimal investment strategy is the same as the one in the It\^o case (that is, the uninformed case), which implies to invest the whole amount $M$ in the stock. Hence,
\begin{eqnarray}\nonumber
    f(B(T)) &=& 1.
\end{eqnarray}
These results suggest that the Russo-Vallois integral works as one would expect from a financial perspective, while the Ayed-Kuo and Hitsuda-Skorokhod integrals provide a solution that seems to be counterintuitive in the financial sense. Indeed, from our present results along with those in~\cite{escudero} it follows that
\begin{equation}\nonumber
\mathbb{E}\left[S^{\text{(I)}}(T)\right] = \mathbb{E}\left[M^{\text{(AK)}}(T)\right] = \mathbb{E}\left[M^{\text{(HS)}}(T)\right] < \mathbb{E}\left[M^{\text{(RV)}}(T)\right],
\end{equation}
where we have chosen the optimal investment allocation in each case. If we selected the optimal Russo-Vallois strategy for all the three anticipating cases the situation becomes even worse, see~\cite{escudero}; and it could be even more critical if we allowed different strategies, see~\cite{bastonsescudero}. All in all, our results suggest that while the Russo-Vallois forward integral allows the insider to make full use of the privileged information, both the Ayed-Kuo and Hitsuda-Skorokhod integrals effectively transform the insider into an uninformed trader.

Let us finish mentioning that it is also notorious that the Ayed-Kuo and Hitsuda-Skorokhod integrals give
simpler mathematical results. This is an indication, at least to us, of their potential use in different applications, be them financial or other. As the more classical versions of the interpretation of
noise problem show, it is not easy to anticipate what they could be, since the right interpretation
should in general be chosen on a problem by problem basis~\cite{escudero2}.

\section*{Acknowledgments}
We are grateful to two anonymous referees for their insightful comments and suggestions.

\bibliographystyle{amsplain}

\end{document}